\newtheorem{theorem}{Theorem}[section]
\newtheorem{proposition}[theorem]{Proposition}
\newtheorem{lemma}[theorem]{Lemma}
\theoremstyle{definition}
\newtheorem{definition}{Definition}[section]
\theoremstyle{remark}
\numberwithin{equation}{section}
\newcommand{\abs}[1]{\left\vert#1\right\vert}
\begin{document}
\title[]{On the evolution of topological connectivity by thresholding of affinities. An application to public transport}
%


\author[]{Hugo Aimar}
\author[]{Carlos Exequiel Arias}
\author[]{Ivana G\'{o}mez}
%
\subjclass[2020]{Primary. 54H30. Secondary 05C40}

\keywords{connected components; neighborhood topology; public transport}

\begin{abstract}
In this paper we use the neighborhood topology generated by affinities between pairs of points in a set, in orden to explore the underlying dynamics of connectivity by thresholding of the affinity. We apply the method to the connectivity  provided by the public transport system in Buenos Aires.
\end{abstract}

\maketitle

\section{Introduction}\label{sec:intro}
The number of connected components of a space is a topological concept which provides some interesting information on the interactions between the points of the set. On the other hand, the evolution of the number of connected components by thresholding of affinities between points or parts of a set, can be taken as a benchmark of the system under analysis. Roughly speaking an affinity can be thought as the reciprocal of a distance. Nevertheless the mathematical notion of metric or distance on a set is very precise and in applications, frequently, we find non metrizable affinities. Before introducing the definitions, let us consider the weighted undirected affinity graph to which we shall apply our results.

Approximately one third of the total population of Argentina is concentrated around Buenos Aires City. The acronym AMBA (Área Metropolitana Buenos Aires) is used to name these 41 cities with a total population of 17 million people. The public transportation of people between these 41 districts has a unified system of electronic tickets, for buses, trains, subways. The acronym of this system is SUBE (Sistema Único de Boleto Electrónico) Unified System for Electronic Tickets. The global public data provided by SUBE, regarding the number of daily transactions, gives a transportation affinity matrix between any pair of the 41 cities in AMBA. As we shall see this affinity is far from being related to the geographic distance of any couple of districts. In \cite{AcAiGoMoTCAM22} the authors introduce in this setting the diffusive metrics of Coifman and Lafon \cite{CoifmanLafon06} defined by the spectrum of the Laplace operator in the weighted undirected graph of the 41 cities as vertices and a measure given by the SUBE data of the affinity of any two cities in AMBA. We shall consider three of the these matrices corresponding to 2020 in three diferent moments of the evolution of COVID-19 in Argentina: March, April and June and we shall compare their connection dynamics by thresholding of the affinities. 

The aim of this paper is to construct non necessarily metrizable topologies on data sets with affinities and to study the evolution of the number of connected components after thresholding of the affinities. Once the theoretical aspects of the proposal are achieved in sections \ref{sec:AffinitiesandInducedTopologiesonAbstractSets} and \ref{sec:ThresholdingOfAffinities}, in Section~\ref{sec:AnApplication} we apply the results to the SUBE matrices introduced above.

\section{Affinities and induced topologies on abstract sets}\label{sec:AffinitiesandInducedTopologiesonAbstractSets}

Given a metric on a set $X$, a topology is immediately determined on $X$ as the family of all open subsets of $X$, i.e. the sets $G\subseteq X$ such that for every $x\in G$ there exists a positive $r$ for which the metric ball centered at $x$ with radious $r$ is contained in $G$. The notion of affinity is, in some heuristic sense, reciprocal to the notion of metric. In fact, two points in the abstract metric space $X$, which could be two vertices of a metric graph, can be considered to have high affinity if they are close in the metric sense. Let us give the precise definition of what we shall call an affinity on an abstract set.
\begin{definition}
	Let $X$  be a set. A positive real valued function $A$ defined on the product set $X\times X$ is said to be an \textbf{affinity on $X$} if\begin{enumerate}
		\item $A$ is symmetric; $A(x_1,x_2)=A(x_2,x_1)$ for every $x_1$, $x_2\in X$; and

		\item $A(x,x)=+\infty$ for every $x\in X$.
	\end{enumerate}
We shall also say that $(X,A)$ is an affinity space.
\end{definition}

In \cite{AiGoAGMS18} two of the authors show that a natural metric determined by the affinity $A$ can be defined on $X$ if and only if the affinity $A$ satisfies a transitivity property. This transitivity property reflects an heuristic idea, which is not always true in some interesting models as we shall see in our application in the last section.

This heuristic idea, which sometimes is not valid in concrete situations, is the following: $A(x_1,x_2)$ large and $A(x_2,x_3)$ large implies that $A(x_1,x_3)$ is large. Or more quantitatively $A(x_1,x_2)>\lambda>0$ and $A(x_2,x_3)>\lambda$, then $A(x_1,x_3)>\lambda/2$. Actually the precise result is that under some extra transitivity condition in $A$ there exists a metric $d$ on $X$ related to $A$ by $A(x_1,x_2)=\varphi(d(x_1,x_2))$ with $\varphi(0)=+\infty$, $\varphi(\infty)=0$ and $\varphi$ decreasing. (See \cite{AiGoAGMS18}).

The aim of this section is to construct a topology associated to a given non transitive affinity without going through metric structures on the given set. The main tool is the construction of a topology on $X$ by neighborhood systems defined on $X$. See Kelley~\cite{KelleybookEudeba}. Let us recall that a topology on $X$ is a subfamily $\tau$ of subsets of $X$ that contains both $\emptyset$ and $X$ and is closed under finite intersections and arbitrary unions. With $\mathcal{P}(X)$ we denote, as usual, the set of all subsets of $X$.

\begin{proposition}\label{propo:topologyNxfamily}
	Let $X$ be a set and let $\mathcal{N}: X\to \mathcal{P}(\mathcal{P}(X))$ be a function that to each $x$ assigns a nonempty family $\mathcal{N}_x$ of subsets of $X$ satisfying the following properties,
	\begin{enumerate}[(i)]
		\item if $U\in\mathcal{N}_x$, then $x\in U$;
		\item if $U$ and $V$ belong to $\mathcal{N}_x$, then $U\cap V\in\mathcal{N}_x$;
		\item if $U\in\mathcal{N}_x$ and $V\supset U$, then $V\in\mathcal{N}_x$.
	\end{enumerate}
Then the family
\begin{equation*}
	\tau=\left\{U: U\in\mathcal{N}_x \textrm{ for every } x\in U\right\}
\end{equation*}
is a topology in $X$.
\end{proposition}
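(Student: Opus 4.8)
The plan is to verify directly that the proposed family $\tau$ satisfies the three axioms of a topology: it contains $\emptyset$ and $X$, it is closed under finite intersections, and it is closed under arbitrary unions. The proof is essentially a routine verification, so the main work is simply tracking which of the hypotheses (i)--(iii) on the neighborhood systems $\mathcal{N}_x$ is used at each step.

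First I would dispose of the two trivial memberships. The set $\emptyset$ belongs to $\tau$ vacuously, since there is no $x\in\emptyset$ for which anything must be checked. For $X$ itself, I would fix an arbitrary $x\in X$; since $\mathcal{N}_x$ is nonempty, pick some $U\in\mathcal{N}_x$, and then property (iii) applied to $U\subseteq X$ gives $X\in\mathcal{N}_x$. As $x$ was arbitrary, $X\in\tau$.

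Next I would handle finite intersections, for which it suffices to treat two sets $U,V\in\tau$ and then induct. Take any $x\in U\cap V$. Then $x\in U$ and $x\in V$, so by the definition of $\tau$ we have $U\in\mathcal{N}_x$ and $V\in\mathcal{N}_x$. Property (ii) then yields $U\cap V\in\mathcal{N}_x$. Since this holds for every $x\in U\cap V$, we conclude $U\cap V\in\tau$. Finitely many sets follow by iterating this argument.

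Finally I would treat arbitrary unions. Let $\{U_i\}_{i\in I}$ be a family of members of $\tau$ and set $W=\bigcup_{i\in I}U_i$. Fix $x\in W$; then $x\in U_{i_0}$ for some index $i_0$, and since $U_{i_0}\in\tau$ we have $U_{i_0}\in\mathcal{N}_x$. Because $U_{i_0}\subseteq W$, property (iii) gives $W\in\mathcal{N}_x$. As $x\in W$ was arbitrary, $W\in\tau$, completing the verification. I do not anticipate a genuine obstacle here; the only point requiring mild care is the vacuous argument for $\emptyset$ and the correct pairing of each topology axiom with its enabling hypothesis --- (ii) for intersections and (iii) for both $X$ and unions --- while property (i) guarantees consistency of the construction but is not strictly needed to verify that $\tau$ is a topology.
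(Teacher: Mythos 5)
Your proof is correct and follows essentially the same route as the paper's own (a direct verification of the three topology axioms, using the nonemptiness of $\mathcal{N}_x$ together with (iii) to get $X\in\tau$, (ii) for finite intersections, and (iii) for arbitrary unions); you simply spell out the details that the paper leaves implicit, and your observation that (i) is not needed for this verification is accurate.
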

\begin{proof}
	Trivially $\emptyset$ and $X$, both belong to $\tau$, since, by \textit{(iii)} $X\in\mathcal{N}_x$ for every $x\in X$. If $U$ and $V\in \tau$, then, by \textit{(ii)}, $U\cup V\in\tau$. From \textit{(iii)} it follows also that $\cup_{\alpha\in\Gamma} U_\alpha\in\tau$ if each $U_\alpha$ belongs to $\tau$.
\end{proof}

Connectivity is a topological property. As such it can be defined precisely in any topological space $(X,\tau)$.
\begin{definition}\label{def:connectedspace}
	Let $(X,\tau)$ be a topological space, i.e. a set $X$ with a topology $\tau$. We say that $(X,\tau)$ is \textbf{connected} if $X$ can not be decomposed as the union of two nonempty and disjoint open sets. In other words $(X,\tau)$ is connected if do not exist $U$ and $V$ in $\tau$ with
	\begin{enumerate}[(\textrm{C}1)]
		\item $U\neq \emptyset$, $V\neq \emptyset$;
		\item $U\cap V=\emptyset$; and,
		\item $U\cup V= X$.
	\end{enumerate}
\end{definition}
Given a subset $S$ of $X$ when $(X,\tau)$ is a topological space, the inherited topology in $S$ is given by $\tau_S=\{U\cap S: U\in\tau\}$. We say that a subset $S$ of $X$ is connected if the topological space $(S,\tau_S)$ is connected with the above definition. This can also be rephrased with no reference to the hereditary topology as follows. If $(X,\tau)$ is a topological space, a set $S\subset X$ is connected if and only if do not exist two open sets $U$ and $V$ in $\tau$ such that $S\subset U\cup V$, $U\cap V=\emptyset$, $U\cap S\neq \emptyset$ and $V\cap S\neq \emptyset$. 

\begin{lemma}\label{lem:unionconnected}
	Let $(X,\tau)$ be a topological space and let $\{K_\alpha: \alpha\in\Lambda\}$ be a family of connected sets in $X$, such that $\bigcap_{\alpha\in\Lambda}K_\alpha\neq \emptyset$. Then $\bigcup_{\alpha\in\Lambda}K_\alpha$ is connected.
\end{lemma}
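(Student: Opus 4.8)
The plan is to argue by contradiction, using the reformulation of connectedness for subsets given just before the statement: a set $S\subset X$ fails to be connected precisely when there exist $U,V\in\tau$ with $S\subset U\cup V$, $U\cap V=\emptyset$, $U\cap S\neq\emptyset$ and $V\cap S\neq\emptyset$. So I would suppose, toward a contradiction, that $K:=\bigcup_{\alpha\in\Lambda}K_\alpha$ is not connected, and fix such a pair $U,V\in\tau$ witnessing a separation of $K$.

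The central idea is to use the common point to pin every $K_\alpha$ to the same side of the separation. Since $\bigcap_{\alpha\in\Lambda}K_\alpha\neq\emptyset$, I would choose a point $p$ lying in $K_\alpha$ for every $\alpha$. Because $p\in K\subset U\cup V$ and $U\cap V=\emptyset$, the point $p$ belongs to exactly one of $U$, $V$; without loss of generality assume $p\in U$. Then for each fixed $\alpha$ we have $p\in U\cap K_\alpha$, so $U\cap K_\alpha\neq\emptyset$.

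Next I would invoke the connectedness of each individual $K_\alpha$. For a fixed $\alpha$, the sets $U,V$ satisfy $K_\alpha\subset K\subset U\cup V$ and $U\cap V=\emptyset$, and we have just seen $U\cap K_\alpha\neq\emptyset$. If we also had $V\cap K_\alpha\neq\emptyset$, then $U$ and $V$ would constitute a separation of $K_\alpha$, contradicting that $K_\alpha$ is connected. Hence $V\cap K_\alpha=\emptyset$ for every $\alpha\in\Lambda$.

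Finally I would take the union over $\alpha$: since $V\cap K_\alpha=\emptyset$ for each $\alpha$, we get $V\cap K=V\cap\bigcup_{\alpha\in\Lambda}K_\alpha=\emptyset$, which contradicts the requirement $V\cap K\neq\emptyset$ coming from the assumed separation. This contradiction shows that no such $U,V$ exist, and therefore $K$ is connected. I do not expect any real obstacle here; the only point that needs care is the bookkeeping of the two characterizations of connectedness (the intrinsic one in Definition~\ref{def:connectedspace} versus the subset version), and the observation that the single shared point $p$ forces a \emph{uniform} choice of side across the entire family, which is exactly what the hypothesis $\bigcap_{\alpha\in\Lambda}K_\alpha\neq\emptyset$ provides.
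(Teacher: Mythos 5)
Your proof is correct and takes essentially the same approach as the paper's: argue by contradiction with a separation $U,V$ of the union and use the common point to force every $K_\alpha$ onto the same side. The only immaterial difference is in the final step, where you show $V\cap K_\alpha=\emptyset$ for all $\alpha$ and contradict $V\cap K\neq\emptyset$, whereas the paper selects the $\alpha$ with $V\cap K_\alpha\neq\emptyset$ and contradicts the connectedness of that $K_\alpha$ directly.
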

\begin{proof}
	Take $x\in \bigcap_{\alpha\in\Lambda}K_\alpha$. If $K=\bigcup_{\alpha\in\Lambda}K_\alpha$ were not connected, we would have $U$ and $V$ open and disjoint members of $\tau$ such that $U\cap K\neq \emptyset$, $V\cap K\neq \emptyset$ and $K\subset U\cup V$. Since $x\in K$ and $U\cap V=\emptyset$, then $x$ belongs to one and only one of them. Assume that $x\in U$, so $x\notin V$. Since $V\cap K\neq \emptyset$ then $V\cap K_\alpha\neq\emptyset$ for some $\alpha\in\Lambda$. Hence, $K_\alpha\subset K\subset U\cup V$, but $x\in U\cap K_\alpha$, so that $U\cap K_\alpha\neq\emptyset$. This implies that $K_\alpha$ is not connected.
\end{proof}
For a given topological space $(X,\tau)$ and a given point $x\in X$ the family $C_x=\{S\subset X: x\in S \textrm { and } S \textrm{ is connected}\}$ is nonempty. From Lemma~\ref{lem:unionconnected}, we have that $C(x)=\bigcup_{\{C \textrm{ is connected and } x\in C\}}C$ is connected for every $x\in X$. $C(x)$ is named the \textbf{connected component} of $(X,\tau)$ containing $x$. The whole space is connected if and only if there is one and only one connected component. The number of connected components of $(X,\tau)$ shall be an important parameter for our later analysis. We denote it by $\kappa(X,\tau)$ or $\kappa$ when the topological context is clear.

Let us now use Proposition~\ref{propo:topologyNxfamily} in order to construct a topology on $X$ starting from an affinity in $X$.

\begin{proposition}\label{propo:topologyAffinity}
	Let $(X,A)$ be an affinity space. For $\alpha>0$ and $x\in X$, set $E(x,\alpha)=\{y\in X: A(x,y)>\alpha\}$. Let $\mathcal{N}$ be the function that to each $x\in X$ assigns the family
	\begin{equation*}
		\mathcal{N}_x=\left\{U\subset X: E(x,\alpha)\subset U \textrm{ for some } \alpha>0\right\}
	\end{equation*}
of parts of $X$. Then, the function $\mathcal{N}$ satisfies \textit{(i)}, \textit{(ii)} and \textit{(iii)} in Proposition~\ref{propo:topologyNxfamily}. Hence the family
\begin{equation*}
	\tau_A = \left\{U\subset X: \textrm{ there is a function } \alpha: U\to\mathbb{R}^+ \textrm{ such that } E(x,\alpha(x))\subset U \textrm{ for every }x\in U \right\},
\end{equation*}
is a topology on $X$.
\end{proposition}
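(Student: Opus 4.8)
The plan is to verify directly that the assignment $x \mapsto \mathcal{N}_x$ satisfies the three hypotheses (i), (ii), (iii) of Proposition~\ref{propo:topologyNxfamily}, and then to observe that the resulting topology $\tau$ coincides with the family $\tau_A$ in the statement; the conclusion that $\tau_A$ is a topology follows immediately from Proposition~\ref{propo:topologyNxfamily}. Before checking (i)--(iii) I would record that each $\mathcal{N}_x$ is nonempty, since $E(x,\alpha)\subset X$ for every $\alpha>0$ forces $X\in\mathcal{N}_x$.

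The first observation, used repeatedly, is the monotonicity of the sets $E(x,\alpha)$ in the threshold: if $0<\alpha\le\beta$ then $E(x,\beta)\subset E(x,\alpha)$, because $A(x,y)>\beta$ implies $A(x,y)>\alpha$. For property (i), I would invoke the second axiom of an affinity, $A(x,x)=+\infty$, which gives $A(x,x)>\alpha$ for every $\alpha>0$; hence $x\in E(x,\alpha)\subset U$ whenever $U\in\mathcal{N}_x$. Property (iii) is immediate from the definition of $\mathcal{N}_x$: if $E(x,\alpha)\subset U$ and $U\subset V$, then $E(x,\alpha)\subset V$, so $V\in\mathcal{N}_x$.

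For property (ii), given $U,V\in\mathcal{N}_x$ there exist $\alpha,\beta>0$ with $E(x,\alpha)\subset U$ and $E(x,\beta)\subset V$. Setting $\gamma=\max\{\alpha,\beta\}>0$ and using the monotonicity above yields $E(x,\gamma)\subset E(x,\alpha)\subset U$ and likewise $E(x,\gamma)\subset V$, whence $E(x,\gamma)\subset U\cap V$ and therefore $U\cap V\in\mathcal{N}_x$. This is the only step where anything beyond the definitions is needed, and even here the content is just the monotonicity of $E(x,\cdot)$ together with the existence of a common threshold; so I do not anticipate any genuine obstacle.

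Finally, to close the proof I would identify $\tau$ with $\tau_A$. By the definition in Proposition~\ref{propo:topologyNxfamily}, $U\in\tau$ means $U\in\mathcal{N}_x$ for every $x\in U$, i.e. for each $x\in U$ there is some $\alpha>0$ with $E(x,\alpha)\subset U$; choosing one such value for each $x$ produces a function $\alpha\colon U\to\mathbb{R}^+$ with $E(x,\alpha(x))\subset U$ for every $x\in U$, which is exactly the membership condition defining $\tau_A$. Conversely, any such function witnesses $U\in\mathcal{N}_x$ for each $x\in U$. Hence $\tau=\tau_A$, and Proposition~\ref{propo:topologyNxfamily} gives that $\tau_A$ is a topology on $X$.
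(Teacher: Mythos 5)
Your proof is correct and follows essentially the same route as the paper: verifying (i) via $A(x,x)=+\infty$, (ii) via the maximum of the two thresholds and the monotonicity of $E(x,\cdot)$, and (iii) directly from the definition, then invoking Proposition~\ref{propo:topologyNxfamily}. Your explicit identification of the resulting topology with $\tau_A$ and the remark that each $\mathcal{N}_x$ is nonempty are details the paper leaves implicit, but they do not change the argument.
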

\begin{proof}
	It is clear from Proposition~\ref{propo:topologyNxfamily} that we only need to check that $\mathcal{N}$ satisfies \textit{(i)}, \textit{(ii)} and \textit{(iii)}. Since $A(x,x)=+\infty$, we have that $x\in E(x,\alpha)$ for every $\alpha>0$. Hence if $U\in\mathcal{N}_x$, we have that $x\in U$ because $E(x,\alpha)\subset U$ for some $\alpha>0$. This proves  \textit{(i)}. Take now $U$ and $V$ two sets in $\mathcal{N}_x$, then, there exists $\alpha_U$ and $\alpha_V$ two positive real numbers with $E(x,\alpha_U)\subset U$ and $E(x,\alpha_V)\subset V$. Hence, with $\alpha=\max\{\alpha_U,\alpha_V\}$ we have that $E(x,\alpha)\subset U\cap V$ and $U\cap V\in\mathcal{N}_x$. So \textit{(ii)} is proved. Property \textit{(iii)} is immediate.
\end{proof}

Some examples are in order. Notice first that $A_1\equiv +\infty$ is an affinity in any set $X$, then the induced topology $\tau_{A_1}$ is the trivial $\{\emptyset,X\}$. Hence, the topologies provided by Proposition~\ref{propo:topologyAffinity} need not to be metrizable. On the other extreme case, if $A_2(x,x)=+\infty$ and $A_2(x_1,x_2)=1$ for $x_1\neq x_2$, since $\{x\}=E(x,2)$ for every $x\in X$, then $\{x\}$ is open (belongs to $\tau_{A_2}$) for every $x\in X$. So that $\tau_{A_2}=\mathcal{P}(X)$. Notice also that while $(X,\tau_{A_1})$ is connected, $(X,\tau_{A_2})$ is totally disconnected in the sense that the only connected subsets of $X$ are those that contain only one point. It is easy to check that when $(X,d)$ is a metric space, then $A(x_1,x_2)=d^{-1}(x_1,x_2)$ is an affinity on $X$ and the topology $\tau_A$ is the metric topology in $X$. Our example $A_1$ above shows that not every affinity induced topology is metrizable. 

\section{Thresholding of affinities}\label{sec:ThresholdingOfAffinities}
Let $(X,A)$ be a given affinity space. For $\lambda$ positive define the thresholding of $A$ at $\lambda$, by
\begin{equation*}
	A^{\lambda}(x,y)= \begin{cases} 
		+\infty	&\text{ for } A(x,y)>\lambda \\
		A(x,y)	&\text{ for }  A(x,y)\leq \lambda. 
	\end{cases} 
\end{equation*}
Some elementary properties of the family of affinity spaces $(X,A^\lambda)$ are contained in the next statement.
\begin{proposition}\label{propo:propertiesfamilyofaffinityspaces}
\,
	
	\begin{enumerate}[(a)]
		\item For each $\lambda >0$, 	$(X,A^{\lambda})$ is an affinity space;
		\item for $0<\lambda_1<\lambda_2$, $A^{\lambda_1}(x,y)\geq A^{\lambda_2}(x,y)\geq A(x,y)$.
	\end{enumerate}
\end{proposition}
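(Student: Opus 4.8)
The plan is to verify both claims directly from the definitions, since the statement bundles together two routine but foundational facts about the thresholded affinity $A^{\lambda}$.

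First I would prove part \textbf{(a)} by checking that $A^{\lambda}$ inherits the two defining properties of an affinity from $A$. For symmetry, I observe that $A^{\lambda}(x,y)$ is defined purely in terms of the value $A(x,y)$ and the threshold $\lambda$, through a case split that depends only on whether $A(x,y)>\lambda$. Since $A$ is symmetric, $A(x,y)=A(y,x)$, so both $A^{\lambda}(x,y)$ and $A^{\lambda}(y,x)$ fall into the same case and take the same value; hence $A^{\lambda}$ is symmetric. For the diagonal condition, I note that $A(x,x)=+\infty>\lambda$ for every positive $\lambda$, so the first case of the definition applies and $A^{\lambda}(x,x)=+\infty$. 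It also bears mentioning that $A^{\lambda}$ is positive and real-valued off the diagonal, so that $(X,A^{\lambda})$ genuinely meets the definition of an affinity space.

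Next I would establish part \textbf{(b)}, the monotonicity in both the threshold and relative to $A$ itself. The cleanest approach is to fix a pair $(x,y)$ and compare the three quantities $A^{\lambda_1}(x,y)$, $A^{\lambda_2}(x,y)$, and $A(x,y)$ by splitting into cases according to where $A(x,y)$ sits relative to $\lambda_1$ and $\lambda_2$. Using $0<\lambda_1<\lambda_2$, there are three regimes: $A(x,y)\le \lambda_1$, in which case $A^{\lambda_1}=A^{\lambda_2}=A(x,y)$ and all three inequalities hold with equality; $\lambda_1<A(x,y)\le \lambda_2$, in which case $A^{\lambda_1}(x,y)=+\infty$ while $A^{\lambda_2}(x,y)=A(x,y)$; and $A(x,y)>\lambda_2$, in which case both $A^{\lambda_1}$ and $A^{\lambda_2}$ equal $+\infty$. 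In each regime the chain $A^{\lambda_1}(x,y)\ge A^{\lambda_2}(x,y)\ge A(x,y)$ is immediate, since raising a value to $+\infty$ only increases it and leaving it unchanged preserves equality.

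I do not anticipate any genuine obstacle here: the statement is elementary and follows entirely from unwinding the piecewise definition of $A^{\lambda}$ together with the two axioms of an affinity. The only point requiring a modicum of care is the diagonal case in part \textbf{(a)}, where one must remember that $A(x,x)=+\infty$ strictly exceeds every finite $\lambda>0$, so that thresholding never disturbs the infinite diagonal values; this is exactly what guarantees that each $A^{\lambda}$ remains an affinity rather than degenerating. The case analysis in part \textbf{(b)} is purely mechanical once the three regimes are laid out.
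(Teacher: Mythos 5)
Your proof is correct; the paper states this proposition without proof, treating it as an elementary consequence of the piecewise definition of $A^{\lambda}$, and your verification (symmetry and the diagonal condition for (a), the three-regime case split on where $A(x,y)$ lies relative to $\lambda_1$ and $\lambda_2$ for (b)) is exactly the routine argument intended. One small inaccuracy that does not affect the result: your remark that $A^{\lambda}$ is \emph{real-valued} off the diagonal is not right in general, since $A^{\lambda}(x,y)=+\infty$ whenever $A(x,y)>\lambda$ even for $x\neq y$; but the paper's notion of affinity explicitly tolerates the value $+\infty$ off the diagonal (witness the example $A_1\equiv+\infty$), so nothing is lost.
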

In this way, given an affinity $A$ on the set $X$, we produce a one parameter family of affinities $A^\lambda$ on $X$ for $\lambda>0$. Applying Proposition~\ref{propo:topologyAffinity} for each $\lambda>0$ we obtain a corresponding topology $\tau_\lambda$ on $X$ determined by the affinity space $(X,A^{\lambda})$. Let us denote with $\tau_A$ the topology induced on $X$ by $A$. With the notation introduced in Section~\ref{sec:AffinitiesandInducedTopologiesonAbstractSets}, for each topological space $(X,\tau)$, $\kappa(X,\tau)=\#\{C: C \textrm{ is a connected component of }(X,\tau)\}$, where $\#$ denotes the cardinal. With the above family of topologies on $X$ given by the thresholding of $A$, we obtain a function $\kappa:\mathbb{R}^+\to [1,\#(X)]$ given by $\kappa(\lambda)=\kappa(X,\tau_\lambda)$. Set also $\kappa_A=\kappa(X,\tau_A)$.

\begin{proposition}
	With the above notation, the following properties hold
	\begin{enumerate}[(a)]
		\item if $0<\lambda_1<\lambda_2$, then $\tau_{\lambda_1}\subseteq \tau_{\lambda_2}\subseteq\tau_A$;
		\item $\kappa(\lambda)$ is nondecreasing and bounded above by $\kappa_A$;
		\item when $X$ is finite, for $\lambda$ large enough, we have that $\tau_\lambda=\tau_A$ and $\kappa(\lambda)=\kappa_A$.
	\end{enumerate}
\end{proposition}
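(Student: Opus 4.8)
The plan is to base the whole statement on two monotonicity principles together with the stabilization of the thresholding when $X$ is finite. For part (a) I would first isolate the \emph{antitone} correspondence between the size of an affinity and the fineness of its topology: if $A$ and $B$ are affinities on $X$ with $B(x,y)\geq A(x,y)$ for all $x,y\in X$, then $\tau_B\subseteq\tau_A$. Writing $E_A(x,\alpha)=\{y\in X:A(x,y)>\alpha\}$ and $E_B(x,\alpha)=\{y\in X:B(x,y)>\alpha\}$, the inequality $B\geq A$ yields $E_A(x,\alpha)\subseteq E_B(x,\alpha)$ for every $x\in X$ and every $\alpha>0$. Hence, if $U\in\tau_B$ and $x\in U$, an $\alpha>0$ with $E_B(x,\alpha)\subseteq U$ also satisfies $E_A(x,\alpha)\subseteq U$, so $U\in\tau_A$. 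Applying this twice to the chain $A^{\lambda_1}\geq A^{\lambda_2}\geq A$ furnished by Proposition~\ref{propo:propertiesfamilyofaffinityspaces}(b) gives $\tau_{\lambda_1}\subseteq\tau_{\lambda_2}\subseteq\tau_A$.

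For part (b) I would prove the purely topological fact that finer topologies have at least as many connected components: if $\tau\subseteq\tau'$ are topologies on $X$, then $\kappa(X,\tau)\leq\kappa(X,\tau')$. The crucial observation is that any set connected in the finer topology $\tau'$ is also connected in $\tau$, because a disconnection by two disjoint members of $\tau$ would, since $\tau\subseteq\tau'$, also be a disconnection by members of $\tau'$. Therefore each $\tau'$-component is a $\tau$-connected set and lies in a unique $\tau$-component, which defines a map $\Phi$ from the family of $\tau'$-components to the family of $\tau$-components. Choosing a point $x$ in a prescribed $\tau$-component $C$, the $\tau'$-component containing $x$ is $\tau$-connected and contains $x$, hence is contained in $C$ and is sent by $\Phi$ to $C$; thus $\Phi$ is surjective and the cardinality inequality follows. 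Combined with part (a), the inclusion $\tau_{\lambda_1}\subseteq\tau_{\lambda_2}$ gives $\kappa(\lambda_1)\leq\kappa(\lambda_2)$, so $\kappa$ is nondecreasing, and $\tau_\lambda\subseteq\tau_A$ gives $\kappa(\lambda)\leq\kappa_A$.

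For part (c) I would use finiteness to make the thresholding stabilize. As $X$ is finite, only finitely many off-diagonal pairs occur, so I set $M=\max\{A(x,y):x\neq y,\ A(x,y)<+\infty\}$, with $M$ taken arbitrarily (say $M=1$) in the degenerate case where every off-diagonal value equals $+\infty$. For any $\lambda\geq M$ I claim that $A^\lambda=A$ pointwise: if $A(x,y)<+\infty$ then $A(x,y)\leq M\leq\lambda$, so $A^\lambda(x,y)=A(x,y)$; and if $A(x,y)=+\infty$, which includes the diagonal, then $A(x,y)>\lambda$, so $A^\lambda(x,y)=+\infty=A(x,y)$. Equal affinities induce equal topologies, whence $\tau_\lambda=\tau_A$ and therefore $\kappa(\lambda)=\kappa_A$.

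The inclusions in (a) and the stabilization argument in (c) are routine once the definitions are unwound; the step demanding real care is the components inequality underlying (b), where one must verify both that connectedness passes from the finer topology $\tau'$ down to the coarser $\tau$ and that the induced map $\Phi$ on components is surjective, so that every $\tau$-component is realized and none is overcounted.
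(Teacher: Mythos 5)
Your proposal is correct and follows essentially the same route as the paper: part (a) via the monotonicity of the sets $E(x,\alpha)$ under the pointwise comparison $A^{\lambda_1}\geq A^{\lambda_2}\geq A$, part (b) via the surjective map from components of the finer topology onto components of the coarser one, and part (c) via the maximum of the finitely many finite affinity values. You merely package the first two steps as general lemmas (and treat the degenerate case in (c) slightly more carefully), but the substance is identical to the paper's proof.
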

\begin{proof}
	\textit{(a)} Take $0<\lambda_1<\lambda_2$, $U\in\tau_{\lambda_1}$ and $x\in U$. Then, there exists $\alpha>0$ such that $E_{\lambda_1}(x,\alpha)=\{y\in X: A^{\lambda_1}(x,y)>\alpha\}\subset U$. On the other hand, since from \textit{(b)} in Proposition~\ref{propo:propertiesfamilyofaffinityspaces}, $A^{\lambda_1}(x,y)\geq A^{\lambda_2}(x,y)$, we have that $E_{\lambda_2}(x,\alpha)\subseteq E_{\lambda_1}(x,\alpha)$. So that $E_{\lambda_2}(x,\alpha)\subset U$ and $U\in\tau_{\lambda_2}$. The same argument shows that $\tau_{\lambda_2}\subseteq \tau_{A}$ for every $\lambda_2>0$. \textit{(b)} Take again $0<\lambda_1<\lambda_2$. Then, since $\tau_{\lambda_1}\subseteq \tau_{\lambda_2}$, every $\tau_{\lambda_2}$ connected set is also $\tau_{\lambda_1}$ connected. In fact, if $K$ is $\tau_{\lambda_2}$ connected, then $K$ can not be written as $K=(K\cap U)\cup (K\cap V)$ with $U, V\in\tau_{\lambda_2}$, $U\cap V=\emptyset$ and $K\cap U\neq \emptyset\neq K\cap V$. Since every $U$ in $\tau_{\lambda_1}$ is also in $\tau_{\lambda_2}$, then $K$ can not be $\tau_{\lambda_2}$ connected. Take now $C_2(x)$ the connected component in $\tau_{\lambda_2}$ containing $x\in X$, hence $C_2(x)$ is also $\tau_{\lambda_1}$ connected and contains $x$. Thus $C_1(x)=\bigcup_{\{C: \tau_{\lambda_1} \textrm{ connected and } x\in C\}} C \supseteq C_2(x)$. Then we have a function assigning to each $\tau_{\lambda_2}$ connected component $C_2$ a $\tau_{\lambda_1}$ connected component $C_1$ which is onto. Hence $\kappa(\lambda_2)\geq \kappa(\lambda_1)$. The same argument shows that $\kappa(\lambda)\leq \kappa_A$ for every $\lambda>0$. \textit{(c)} Notice that, being 
	$\{A(x,y): A(x,y)<\infty\}$ a finite set of positive real numbers, it has a maximum $M$. Hence, for $\lambda >M$ we get $A^{\lambda}=A$. So that $\tau_{\lambda}=\tau_A$ and $\kappa(\lambda)=\kappa_A$.
\end{proof}

Let $A$ be a given affinity on a finite set $X=\{1,2,\ldots,n\}$. In this case $A$ can be seen as an $n\times n$ symmetric matrix with positives entries such that $A_{ii}=+\infty$ for every $i=1,\ldots,n$. Of course $A_{ij}$ can take the value $+\infty$ in several other pairs $(i,j)$ outside the diagonal as far as the symmetry is preserved. We proceed to construct a simple graph $\mathcal{G}_A$ associated to $A$ as follows. Take $\mathcal{V}=X$ as the set of vertices and $\mathcal{E}_A=\{\{i,j\}: A_{ij}=A_{ji}=+\infty\}$ as the set of edges; $\mathcal{G}_A=(\mathcal{V},\mathcal{E}_A)$.

The adjacency matrix of this graph is $\mathfrak{A}_{ij}=1$ if $\{i,j\}\in\mathcal{E}_A$ and $\mathfrak{A}_{ij}=0$ when $\{i,j\}\notin\mathcal{E}_A$. In other words, $\mathfrak{A}$ has ones where $A$ takes the value $+\infty$ and $\mathfrak{A}$ has zeros where $A$ has finite numbers.

Given a simple graph $\mathcal{G}=(\mathcal{V},\mathcal{E})$ the idea of a path joining two vertices $i, j$ is simple and well known. We say that there is a \textbf{path joining $i$ and $j$} if there exist an integer $k>1$ and a sequence $i=i_1,i_2,\ldots,i_k=j$ of vertices in $\mathcal{V}$ such that $\{i_l,i_{l+1}\}$ belongs to $\mathcal{E}$ for every $l=1,\ldots,k-1$. A subset $\mathcal{V}^{'}$ of $\mathcal{V}$ is said to be \textbf{path connected} with respect to $\mathcal{G}$ if for every pair $i$ and $j$ of vertices in $\mathcal{V}^{'}$ there exists a path in $\mathcal{V}^{'}$ joining $i$ and $j$. A \textbf{path connected component} is a maximal path connected subset of $\mathcal{V}$. In other words $\mathcal{V}^{*}$ is a path connected component if $\mathcal{V}^{*}$ is path connected and if $\widetilde{\mathcal{V}}\supsetneqq\mathcal{V}^{*}$, then $\widetilde{\mathcal{V}}$ is not path connected.

In order to use \texttt{Python} algorithms, such as \texttt{number\_connected\_components}, the following result will be useful.

\begin{proposition}
	Let $A$ be a given affinity on the set $X=\{1,2,\ldots,n\}$ and let $\mathcal{G}_A$ be the induced simple graph. Then
	\begin{enumerate}[(i)]
		\item a subset $\widetilde{X}$ of $X$ is connected in the sense of Definition~\ref{def:connectedspace} with respect to the neighborhood topology $\tau_A$ in $X$ if and only in $\widetilde{X}$ is path connected with respect to $\mathcal{G}_A$;
		\item the number of topological connected components in $(X,\tau_A)$ coincides with the number of path connected components with respect to $\mathcal{G}_A$.
	\end{enumerate}
\end{proposition}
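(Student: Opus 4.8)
The plan is to prove the two parts together, since part (ii) follows immediately from part (i): if the connected subsets and the path connected subsets of $X$ coincide, then in particular the maximal ones coincide, so the connected components of $(X,\tau_A)$ are exactly the path connected components of $\mathcal{G}_A$, and their cardinalities agree. Thus the whole content is in the equivalence stated in part (i), and I would state this reduction explicitly before turning to the real work.

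The key observation that makes part (i) tractable is to unwind what the topology $\tau_A$ looks like on the finite set $X$. For $x\in X$, the smallest neighborhood of $x$ available from the affinity is $E(x,\alpha)=\{y:A(x,y)>\alpha\}$, and since $X$ is finite, letting $\alpha$ increase past the largest finite value of $A(x,\cdot)$ leaves exactly the set of $y$ with $A(x,y)=+\infty$, that is, $\{x\}\cup\{y:\{x,y\}\in\mathcal{E}_A\}$, the closed neighborhood of $x$ in $\mathcal{G}_A$. So each point $x$ has a minimal open neighborhood $N(x)$ equal to $x$ together with its graph-neighbors. I would record that a set $U$ is open in $\tau_A$ if and only if $N(x)\subseteq U$ for every $x\in U$, i.e. $U$ is closed under taking graph-neighbors; equivalently, $U$ is open precisely when it is a union of path connected components, since being neighbor-closed forces $U$ to contain the whole component of each of its points. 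This is the structural lemma I would isolate first.

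With this description in hand, the equivalence in part (i) becomes natural. For the direction that path connected implies topologically connected, I would take $\widetilde{X}$ path connected and suppose for contradiction that open sets $U,V$ disconnect it in the sense of the rephrased definition given just after Definition~\ref{def:connectedspace}; picking $i\in U\cap\widetilde{X}$ and $j\in V\cap\widetilde{X}$ and a path $i=i_1,\dots,i_k=j$ inside $\widetilde{X}$, I would follow the path step by step: since $U$ is neighbor-closed and $i_1\in U$, consecutive vertices stay in $U$ until some first index where the path enters $V$, and at that step one vertex lies in $U$ and its graph-neighbor lies in $V$, contradicting that $U$ being open contains the full neighborhood $N$ of its points while $U\cap V=\emptyset$. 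For the converse, I would argue the contrapositive: if $\widetilde{X}$ is not path connected it splits into a nonempty proper union of graph-components, and because each path connected component is itself neighbor-closed within $\widetilde{X}$, one component and the union of the rest furnish two disjoint relatively open sets covering $\widetilde{X}$, so $\widetilde{X}$ is topologically disconnected.

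I expect the main obstacle to be bookkeeping rather than conceptual: precisely reconciling the abstract open sets $U,V\in\tau_A$ of the topological definition of connectedness with the combinatorial neighbor-closure condition, and making sure the ``follow the path'' argument correctly locates the first crossing edge and extracts the contradiction from $U\cap V=\emptyset$ together with neighbor-closure. I would also be careful in the converse direction that the open sets witnessing disconnection are genuinely in $\tau_A$ and not merely relatively open in $\widetilde{X}$; using the structural lemma that unions of graph-components are open in $X$ handles this cleanly. Once the minimal-neighborhood description is established, everything else is a routine finite induction along paths.
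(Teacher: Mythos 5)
Your structural lemma is exactly the content of the paper's (much terser) proof: the paper simply observes that $j$ belongs to every $\tau_A$-open set containing $i$ if and only if $A_{ij}=+\infty$, i.e.\ $\{i,j\}\in\mathcal{E}_A$, which is your statement that the minimal open neighborhood of $i$ is its closed graph-neighborhood. Your reduction of (ii) to (i), and your forward direction (path connected implies topologically connected, by following a path through a neighbor-closed open set) are correct and amount to a more detailed version of what the paper leaves implicit.

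The converse direction, however, has a genuine gap for proper subsets $\widetilde{X}$. A piece of $\widetilde{X}$ that is ``neighbor-closed within $\widetilde{X}$'' is in general \emph{not} of the form $U\cap\widetilde{X}$ with $U\in\tau_A$, and your proposed repair --- that unions of graph-components are open in $X$ --- does not apply, because the path components of $\widetilde{X}$ (with paths constrained to lie in $\widetilde{X}$, as the paper's definition requires) need not be unions of path components of the ambient graph $\mathcal{G}_A$. Concretely, take $X=\{1,2,3\}$ with $\mathcal{E}_A=\{\{1,2\},\{2,3\}\}$ and $\widetilde{X}=\{1,3\}$: every nonempty $\tau_A$-open set is all of $X$, so $\widetilde{X}$ carries the trivial subspace topology and is topologically connected, yet it is not path connected since the only path from $1$ to $3$ passes through $2\notin\widetilde{X}$. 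This shows the gap cannot be closed: item (i), read literally, fails for such $\widetilde{X}$, and the paper's own two-line argument silently skips the same point. Your argument (and the equivalence) is correct when $\widetilde{X}=X$ or, more generally, when $\widetilde{X}$ is a union of path components of $\mathcal{G}_A$; in particular item (ii), which is all that is used later in the paper, is unaffected.
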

\begin{proof}
	Notice first that \textit{(ii)} follows from \textit{(i)}. On the other hand, to prove \textit{(i)} we only need to show that the entry $\mathfrak{A}_{ij}=1$ if and only if any $\tau_A$ open set containing $i$ also contains $j$. But $\mathfrak{A}_{ij}=1$ if and only if $A_{ij}=+\infty$. Take $U$ a $\tau_A$ open set such that $i\in U$. Then there exists $\alpha>0$ such that $E(i,\alpha)\subset U$. Since $E(i,\alpha)=\{k:A_{ik}>\alpha\}$ and $A_{ij}=+\infty$ we have that $j\in E(i,\alpha)\subset U$. So that $j\in U$. On the other hand, if $j$ belongs to any open set containing $i$, then $j\in E(i,\alpha)$ for every $\alpha>0$. This implies $A_{ij}=+\infty$ and $\mathfrak{A}_{ij}=1$.
\end{proof}

Before introducing, in the next section, the application of the above analysis, let us illustrate some simple examples in the real line in order to get some insight regarding the relation between the shape of the curve $\kappa(\lambda)$ as a function of the threshold parameter and the distribution of the data points. The four examples are subsets of real numbers and the affinity is the metric one, that is $A(x,y)=\frac{1}{\abs{x-y}}$. For the first example, see Figure~\ref{fig:allSeriesConnnectedComponents}, consider the set $X_1=\left\{f_1(i)=\log_2 i: i=1,2,\ldots,20\right\}$. For the second, we take the set $X_2=\left\{f_2(i)=\sqrt{i-1}: i=1,2,\ldots,20\right\}$, for the third $X_3=\left\{f_3(i)=20\bigl(1-\frac{1}{i}\bigr): i=1,2,\ldots,20\right\}$ and for the last one we take $X_4=\left\{f_4(i)=20\right.\bigl(1-\left(\frac{5}{6}\right)^{i-1}\bigr): i=1,2,\ldots,20\left.\right\}$. Let us notice that the concave shapes of $\kappa(\lambda)$ correspond to convergence of the generating sequences as in cases $X_3$ and $X_4$.
%
%
%
\begin{figure}[!]
	\begin{tabular}{|c|c|}
		\hline
		$f_m$ & $\kappa(X_m,\tau_A)$ for $m=1,2,3,4$ \\
		\hline
		\includegraphics[width=7cm]{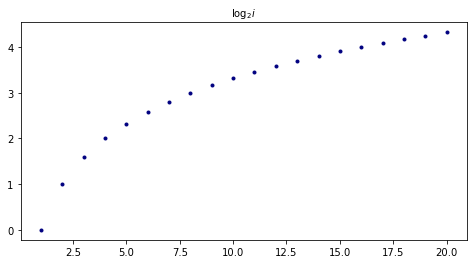} & \includegraphics[width=7cm]{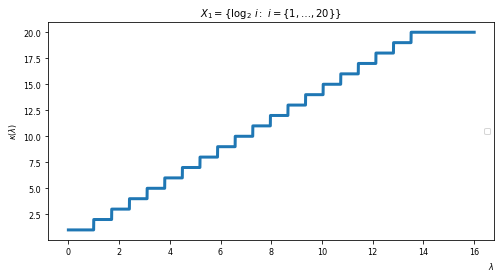}  \\
		\hline
		\includegraphics[width=7cm]{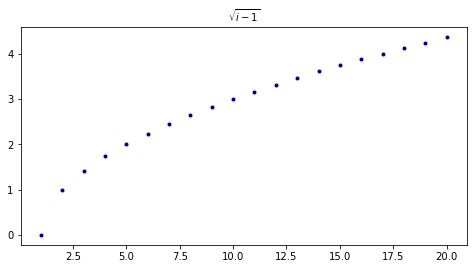} &
		\includegraphics[width=7cm]{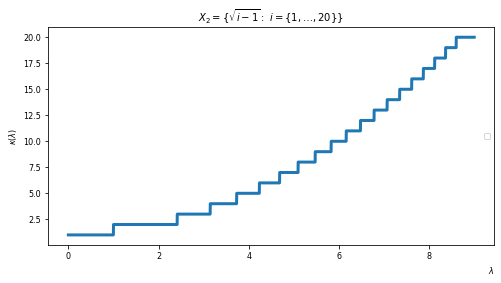}  \\
		\hline
		\includegraphics[width=7cm]{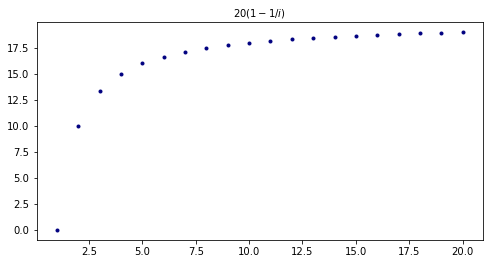} &
		\includegraphics[width=7cm]{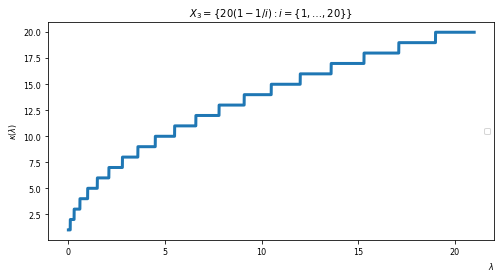}  \\
		\hline
		\includegraphics[width=7cm]{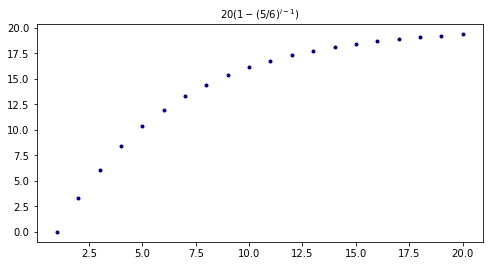} &
		\includegraphics[width=7cm]{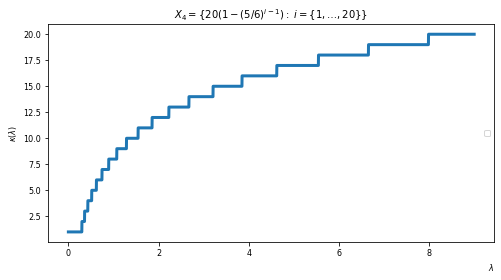}  \\
		\hline
	\end{tabular}
	\caption{$\kappa(\lambda)$ (on the right) for different distribution data points (on the left).}\label{fig:allSeriesConnnectedComponents}
\end{figure}

\section{Application to the public transport in Buenos Aires}\label{sec:AnApplication}
In Figure~\ref{fig:AMBAmap}, we show schematically the relative geographic locations of the 41 cities in AMBA. The numbers in each one of the districts will be used as the indices $i=1,\ldots,41$ that identify the vertices of the graph and the affinities that we shall consider.
\begin{figure}[!]
	\includegraphics[width=11cm]{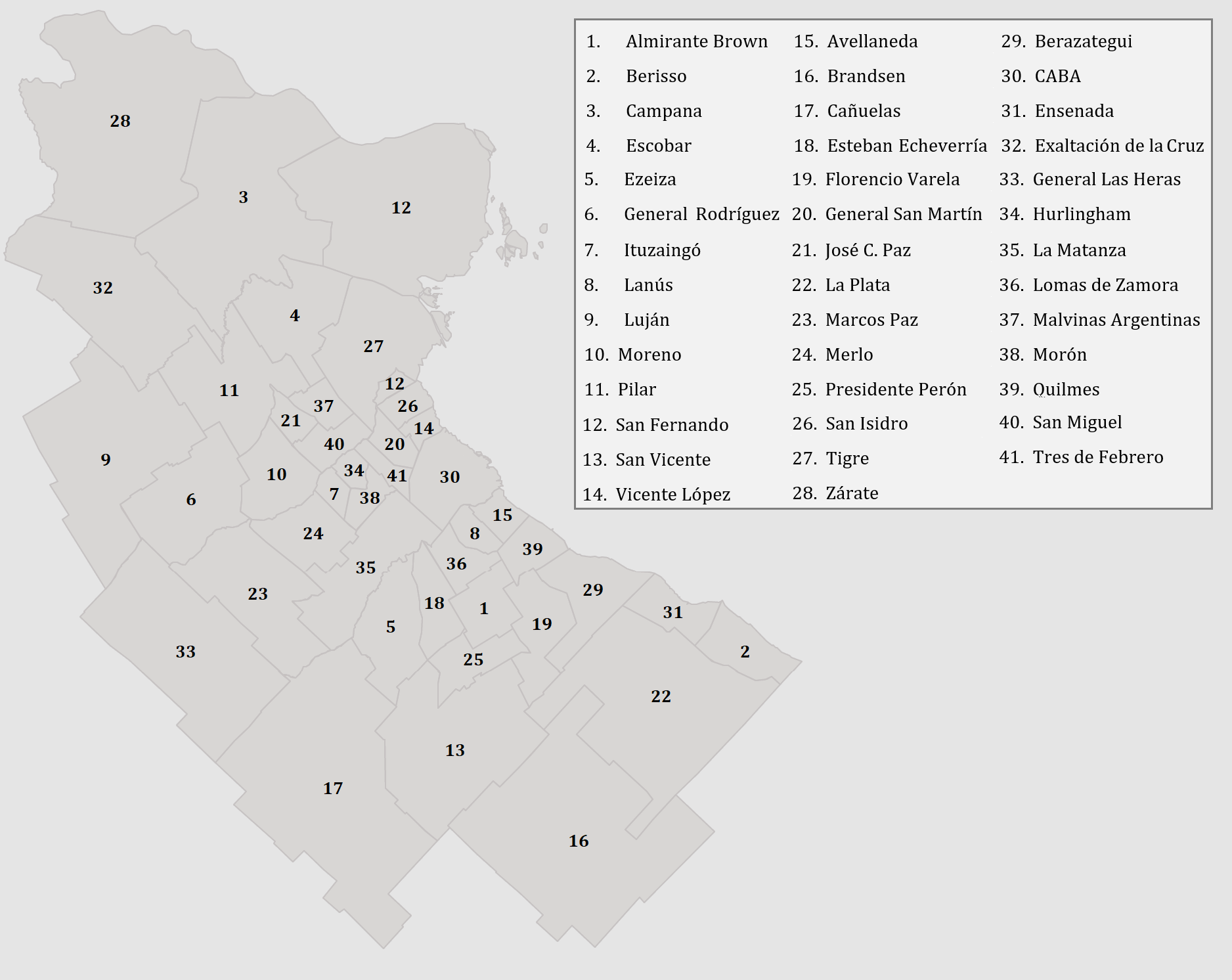}
	\caption{Map of the 41 cities of AMBA}\label{fig:AMBAmap}
\end{figure}

The normalized affinity matrices that we shall consider correspond to three different days of 2020 at the beginning of the circulation of COVID-19 in Argentina. The first $A^1$, corresponds to March~4th 2020 before the detection of the spread of COVID-19, with the public transport system working normally. The second, $A^2$, corresponds to April~8th 2020, with the beginning of the circulation restrictions imposed by the health administration. The third, $A^3$, is constructed with the data corresponding to June~3rd 2020, when the circulation restrictions started to weaken. 
In all these cases we shall apply the algorithm introduced in the previous sections and we shall obtain the corresponding functions $\kappa(\lambda)$ that describe the evolution of the number of connected components as a function of the threshold parameter $\lambda$ in the three situations described.

The matrix $A^1$, corresponding to March 2020, is the $41\times 41$ normalized matrix with $A^1_{ij}$ measuring the intensity (number of passengers) going from city $i$ to city $j$ and viceversa by March~4th 2020, is given by Figure~\ref{fig:AffinityMatrixMarch}.
\begin{figure}[b]
	\includegraphics[width=11cm]{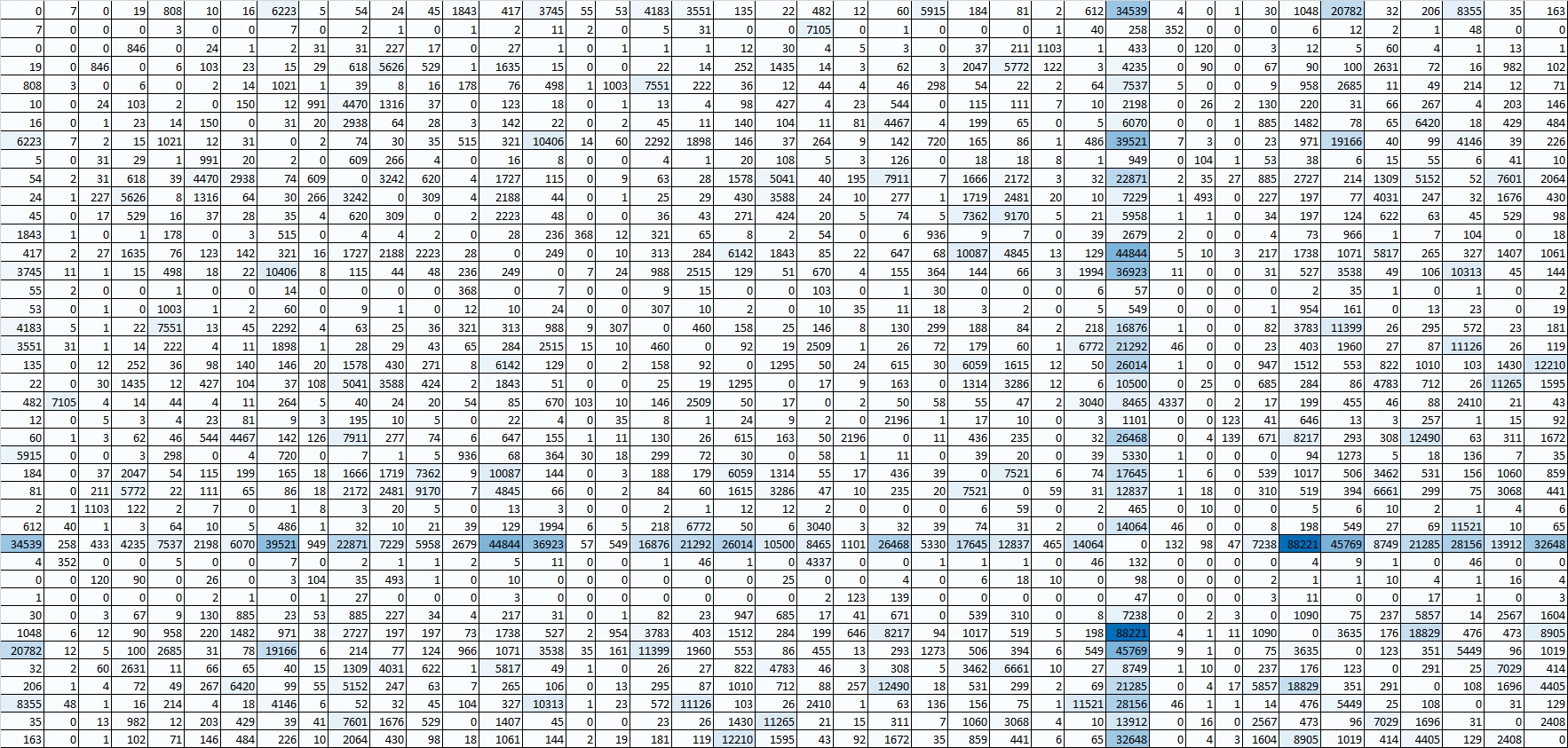}
	\caption{Unnormalized affinity matrix $A^1$ corresponding to March 2020}\label{fig:AffinityMatrixMarch}
\end{figure}
\begin{figure}[ht]
	\includegraphics[width=11cm]{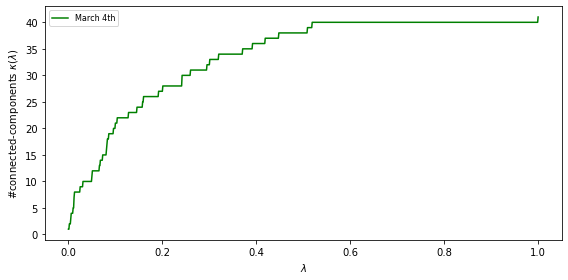}
	\caption{Connectivity curve $\kappa_1(\lambda)$ for affinity matrix $A^1$ (March)}\label{fig:AMBASUBEconnectedComponentsMarch}
\end{figure}
The matrix $A^2$, Figure~\ref{fig:AffinityMatrixApril}, is constructed in the same way with the data corresponding to April~8th 2020.
\begin{figure}[!]
	\includegraphics[width=11cm]{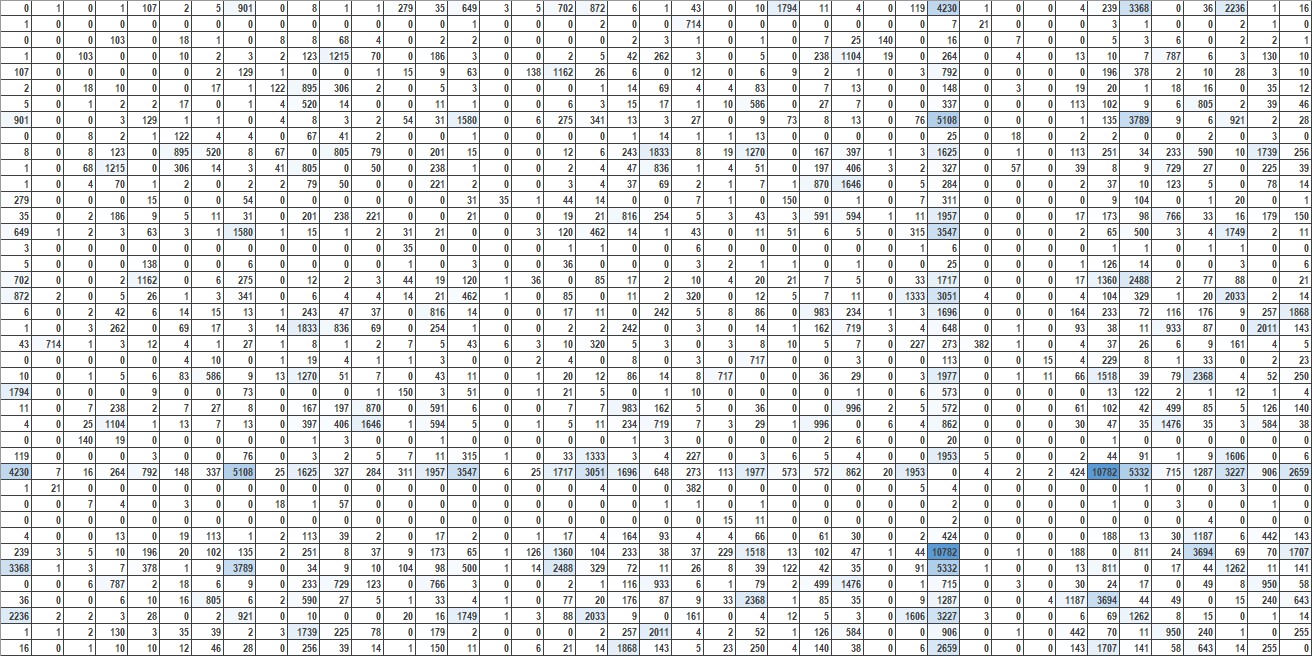}
	\caption{Unnormalized affinity matrix $A^2$ corresponding to April 2020}\label{fig:AffinityMatrixApril}
\end{figure}
\begin{figure}[!]
	\includegraphics[width=11cm]{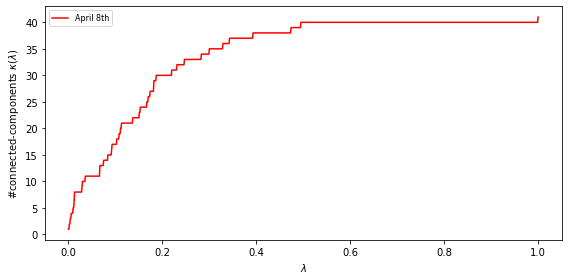}
	\caption{Connectivity curve $\kappa_2(\lambda)$ for affinity matrix $A^2$ (April)}\label{fig:AMBASUBEconnectedComponentsApril}
\end{figure}
The third $A^3$, Figure~\ref{fig:AffinityMatrixJune}, is based on the SUBE data by June~3rd 2020. Figure~\ref{fig:AMBASUBEconnectedComponentsMarch}, Figure~\ref{fig:AMBASUBEconnectedComponentsApril} and Figure~\ref{fig:AMBASUBEconnectedComponentsJune} show the shapes of the connectivity curves $\kappa_i(\lambda)$, $i=1,2,3$.
\begin{figure}[!]
	\includegraphics[width=11cm]{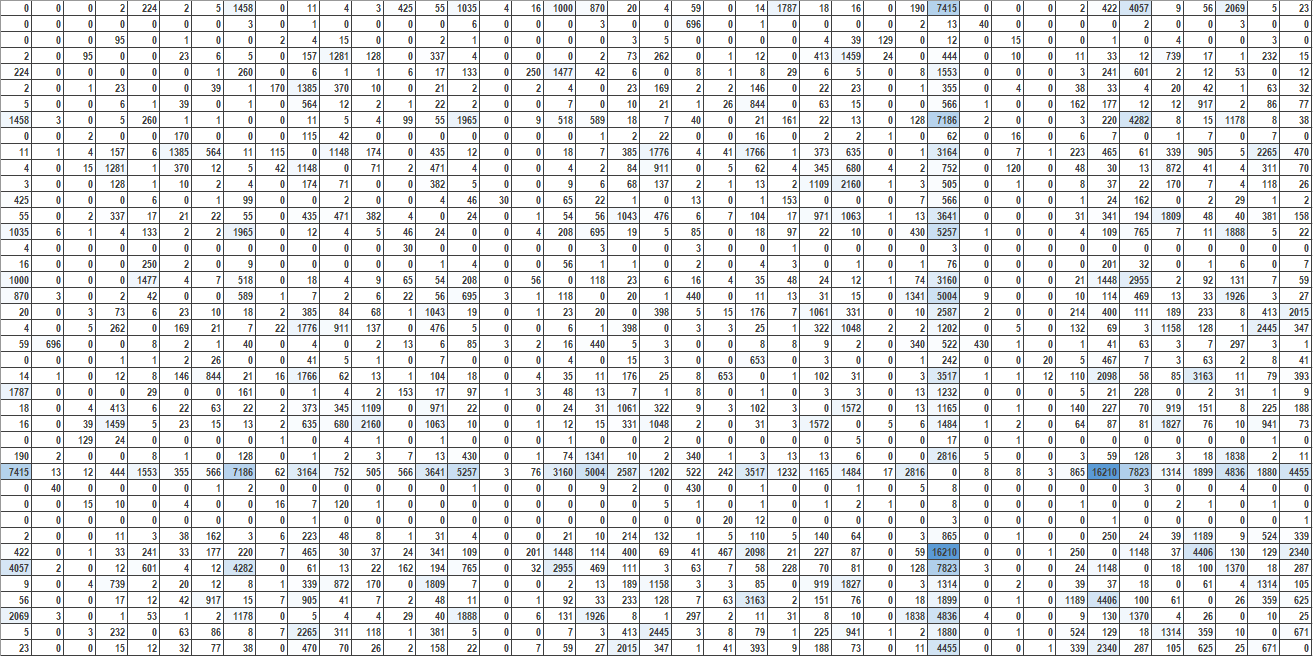}
	\caption{Unnormalized affinity matrix $A^3$ corresponding to June 2020}\label{fig:AffinityMatrixJune}
\end{figure}
\begin{figure}[!]
	\includegraphics[width=11cm]{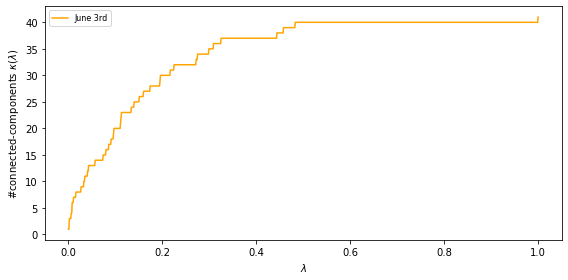}
	\caption{Connectivity curve $\kappa_3(\lambda)$ for affinity matrix $A^3$ (June)}\label{fig:AMBASUBEconnectedComponentsJune}
\end{figure}
The drastic changes of the  number of passengers between the three different dates, as could be expected do not have a great infect in the shapes of these three connectivity dynamics. 

Among the four distributions of data points shown in Figure~\ref{fig:allSeriesConnnectedComponents} the one that better fits the shapes of $\kappa_i$ is that of $X_3$. See Figure~\ref{fig:ConnectedComponentsMarchAprilJuneComparitions}.
\begin{figure}[]
	\includegraphics[width=11cm]{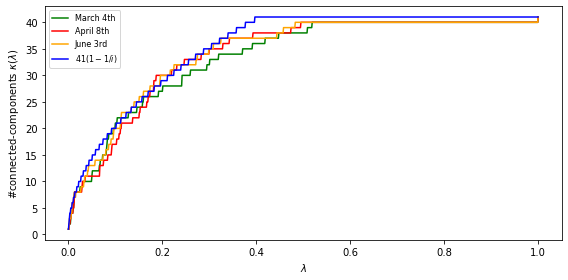}
	\caption{Connected components for SUBE affinity on March, April and June 2020 and the set $X_3$ in Section~\ref{sec:ThresholdingOfAffinities}.}\label{fig:ConnectedComponentsMarchAprilJuneComparitions}
\end{figure}

Let us finally observe that if we use affinities built only on geographic data, the behavior of the curves $\kappa(\lambda)$ instead of concave look roughly convex. In Figure~\ref{fig:ConnectedComponentsMarchW2LongitudFronterasCompartidas} the affinity between districts $i$ and $j$ is determined by the length of the shared boundaries. In Figure~\ref{fig:ConnectedComponentsMarchW3DistanciasEuclideaentreDistritos}, the affinity $A_{ij}=\frac{1}{d_{ij}}$, with $d_{ij}$ the Euclidean distance of the geographical centers of districts $i$ and $j$.
\begin{figure}[ht]
	\includegraphics[width=11cm]{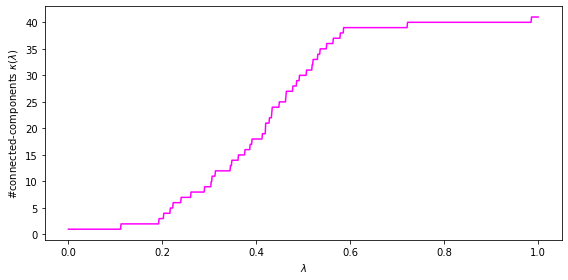}
	\caption{Connected components for matrix $A_{ij} = l_{ij}$ where $l_{ij}$ the length of the boundaries shared by the districts $i$ and $j$ of AMBA.}\label{fig:ConnectedComponentsMarchW2LongitudFronterasCompartidas}
\end{figure}
\begin{figure}[ht]
	\includegraphics[width=11cm]{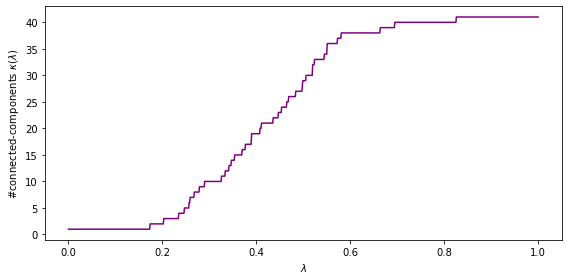}
	\caption{Connected components for matrix $A_{ij} = \frac{1}{d_{ij}}$ where $d_{ij}$ is the Euclidean distance between the geographical centers of the districts $i$ y $j$ in AMBA.}\label{fig:ConnectedComponentsMarchW3DistanciasEuclideaentreDistritos}
\end{figure}


\providecommand{\bysame}{\leavevmode\hbox to3em{\hrulefill}\thinspace}
\providecommand{\MR}{\relax\ifhmode\unskip\space\fi MR }
\providecommand{\MRhref}[2]{%
	\href{http://www.ams.org/mathscinet-getitem?mr=#1}{#2}
}
\providecommand{\href}[2]{#2}


\section*{Statements and Declarations}
\subsection*{Funding}
Consejo Nacional de Investigaciones Cient\'ificas y T\'ecnicas, grant PIP-2021-2023-11220200101940CO.

\subsection*{Conflicts of interest/Competing interests}
The authors have no conflicts of interest to declare that are relevant to the content of this article.

\subsection*{Availability of data and material}
Not applicable.


\subsection*{Acknowledgements}
This work was supported by Consejo Nacional de Investigaciones Cient\'ificas y T\'ecnicas-CONICET and Universidad Nacional del Litoral-UNL, in Argentina.



\smallskip

\noindent{\footnotesize \textit{Affiliation.} 
	\textsc{Instituto de Matem\'{a}tica Aplicada del Litoral ``Dra. Eleonor Harboure'', CONICET, UNL.}
	
	%
%
%

\noindent \textit{Address.} \textmd{IMAL, Streets F.~Leloir and A.P.~Calder\'on, CCT CONICET Santa Fe, Predio ``Alberto Cassano'', Colectora Ruta Nac.~168 km~0, Paraje El Pozo, S3007ABA Santa Fe, Argentina.}

%
\noindent \textit{E-mail:} \verb|haimar@santafe-conicet.gov.ar| \\ \hspace*{1.2cm} \verb|carias@santafe-conicet.gov.ar|\\ \hspace*{1.2cm} \verb|ivanagomez@santafe-conicet.gov.ar| (corresponding author)
}
\end{document}